\begin{document}
\title{\bf Chordal Graphs are Fully Orientable}

\author{Hsin-Hao Lai\\
\normalsize Department of Mathematics\\
\normalsize National Kaohsiung Normal University \\
\normalsize Yanchao, Kaohsiung 824, Taiwan\\
\normalsize {\tt Email:hsinhaolai@nknucc.nknu.edu.tw}
\and
Ko-Wei Lih\thanks{The corresponding author}\\
\normalsize Institute of Mathematics\\
\normalsize Academia Sinica\\
\normalsize Nankang, Taipei 115, Taiwan\\
\normalsize {\tt Email:makwlih@sinica.edu.tw}
}
\date{\small \today}
\maketitle

%%%%%%%%%%%%%%%%%macros%%%%%%%%%%%%%%%%%%%%%%%%%%%%
\newtheorem{define}{Definition}
\newtheorem{proposition}[define]{Proposition}
\newtheorem{theorem}[define]{Theorem}
\newtheorem{lemma}[define]{Lemma}
\newtheorem{remark}[define]{Remark}
\newtheorem{corollary}[define]{Corollary}
\newtheorem{problem}[define]{Problem}
\newtheorem{conjecture}[define]{Conjecture}
%
% Environment PROOF
%
\newenvironment{proof}{
\par
\noindent {\bf Proof.}\rm}%
{\mbox{}\hfill\rule{0.5em}{0.809em}\par}
% 0.809/0.5 is roughly the golden ratio

\baselineskip=16pt
\parindent=0.5cm

%%%%%%%%%%%%%%%%%%%%%%%%%%%%%%%%%%%%%%%%%%%%%%%%%%%%

\begin{abstract}
\noindent
Suppose that $D$ is an acyclic orientation of a graph $G$.
An arc of $D$ is called {\em dependent} if its reversal
creates a directed cycle. Let $d_{\min}(G)$ ($d_{\max}(G)$)
denote the minimum (maximum) of the number of dependent arcs
over all acyclic orientations of $G$. We call $G$ {\em fully
orientable} if $G$ has an acyclic orientation with exactly
$d$ dependent arcs for every $d$ satisfying $d_{\min}(G)
\leqslant d \leqslant d_{\max}(G)$. A graph $G$ is called
{\em chordal} if every cycle in $G$ of length at least four
has a chord. We show that all chordal graphs are fully
orientable.

\bigskip
\noindent
{\bf Keyword:}  acyclic orientation; full orientability;
simplicial vertex; chordal graph.
\end{abstract}
%%%%%%%%%%%%%%%%%%%%%%%%%%%%%%%%%%%%%%%%%%%%%%%%%%%%%

%%%%%%%%%%%%%%%%%%%%%%%%%%%%%%%%%%%%%%%%%%%%%%%%%%%%
%
%
\section{Introduction}
%
%
%%%%%%%%%%%%%%%%%%%%%%%%%%%%%%%%%%%%%%%%%%%%%%%%%%%

Let $G$ be a finite graph without multiple edges or 
%self 
loops.
We use $|G|$ and $\|G\|$ to denote the number of vertices and
the number of edges of $G$, respectively. An orientation $D$ of
a graph $G$ is obtained by assigning a fixed direction, either
$x \rightarrow y$ or $y \rightarrow x$, on every edge $xy$ of
$G$. The original undirected graph is called the {\em underlying}
graph of any such orientation.

An orientation $D$ is called {\em acyclic} if there does not
exist any directed cycle. A directed graph having no directed
cycle is commonly known as a {\em directed acyclic graph}, or
DAG for short. DAGs provide frequently used data structures in
computer science for encoding dependencies. An equivalent way
of describing a DAG is the existence of a particular type of
ordering of the vertices called a {\em topological ordering}.
A topological ordering of a directed graph $G$ is an ordering
of its vertices as $v_1, v_2, \ldots , v_{|G|}$ such that for
every arc $v_i \rightarrow v_j$, we have $i<j$.  The reader who is
interested in knowing more about DAGs is referred to reference
\cite{bjg} which supplies a wealth of information on DAGs.

Suppose that $D$ is an acyclic orientation of $G$. An arc 
%$u\rightarrow v$ 
of $D$, or its underlying edge, is called {\em dependent} (in $D$) if 
%there exists a directed cycle in the new
%orientation $D'=(D - (u \rightarrow v)) \cup (v \rightarrow u)$.
its reversal creates a directed cycle in the resulted orientation.
%%%
Note that $u \rightarrow v$ is a dependent arc if and only if
there exists a directed walk of length at least two from $u$ to $v$.
Let $d(D)$ denote the number of dependent arcs in $D$. Let
$d_{\min}(G)$ and $d_{\max}(G)$ be, respectively, the minimum
and the maximum values of $d(D)$ over all acyclic orientations
$D$ of $G$. It is known (\cite{fflw}) that $d_{\max}(G)=\|G\|-|G|+
c$ for a graph $G$ having $c$ connected components.

An interpolation question asks whether $G$ has an acyclic orientation
with exactly $d$ dependent arcs for every $d$ satisfying $d_{\min}(G)
\leqslant d \leqslant d_{\max}(G)$. Following West (\cite{west}),
we call $G$ {\em fully orientable} if its interpolation question has
an affirmative answer. Note that forests are trivially fully orientable.
%and $d_{\min}(K_n)=d_{\max}(K_n)=(n-1)(n-2)/2$ is a well-known fact
%(\cite{west}) for the complete graph $K_n$ on $n$ vertices.
It is also easy to see (\cite{ll}) that a graph is fully
orientable if all of its connected components are. West \cite{west}
showed that complete bipartite graphs are fully orientable. Let
$\chi(G)$ denote the {\em chromatic number} of $G$, i.e., the least
number of colors to color the vertices of $G$ so that adjacent
vertices receive 
%distinct 
different colors. Let $g(G)$ denote the {\em girth}
of $G$, i.e., the length of a shortest cycle of $G$ if there is any,
and $\infty$ if $G$ is a forest. Fisher, Fraughnaugh, Langley, and
West \cite{fflw} showed that $G$ is fully orientable if $\chi(G) < g(G)$.
They also proved that $d_{\min}(G)=0$ when $\chi(G) < g(G)$. In fact,
$d_{\min}(G)=0$ if and only if $G$ is a {\em cover graph}, i.e., the
underlying graph of the Hasse diagram of a partially ordered set.
(\cite{rt}, Fact 1.1).

A number of graph classes have been shown to consist of fully
orientable graphs in recent years. Here, we give a brief summary
%%%
of some
%%%
results. 
%of significance.

A graph is called {\em $2$-degenerate} if each of its subgraphs
contains a vertex of degree at most two. Lai, Chang, and Lih
\cite{lcl} have established the full orientability of 2-degenerate
graphs that generalizes a previous result for outerplanar graphs
(\cite{llt}).
A {\em Halin graph} is a plane graph obtained by drawing a tree
without vertices of degree two in the plane, and then drawing a
cycle through all leaves in the plane. A {\em subdivision of an edge}
of a graph is obtained by replacing that edge by a path consisting of
new internal vertices. A {\em subdivision} of a graph is obtained
through a sequence of subdivisions of edges. Lai and Lih  \cite{ll}
showed that subdivisions of Halin graphs and graphs with maximum
degree at most three are fully orientable.
In \cite{llt}, Lai, Lih, and Tong proved that a graph $G$ is fully
orientable if $d_{\min}(G) \leqslant 1$. This generalizes the results
in \cite{fflw} mentioned before.

The main purpose of this paper is to show that 
%%%
the class of fully orientable graphs includes
%%%
the important class of chordal graphs. 
%consists of fully orientable graphs.

Let $C$ be a cycle of a graph $G$. An edge $e$ of $G$ is called a
{\em chord} of $C$ if the two endpoints of $e$ are non-consecutive
vertices on $C$. A graph $G$ is called {\em chordal} if each cycle
in $G$ of length at least four possesses a chord. Chordal graphs are
variously known as {\em triangulated graphs} \cite{be}, {\em
rigid-circuit graphs} \cite{di}, and {\em monotone transitive graphs}
\cite{ro70} in the literature. Chordal graphs can be characterized in
a number of different ways. (For instance, \cite{bu}, \cite{di},
\cite{fg}, \cite{ga}, and \cite{ro70}).

Chordal graphs have applications in areas such as the solution of
sparse symmetric systems of linear equations \cite{ro72}, data-base
management systems \cite{ty}, knowledge based systems \cite{ebt},
and computer vision \cite{cm}. The importance of chordal graphs
primarily lies in the phenomenon that many NP-complete problems can
be solved by polynomial-time algorithms for chordal graphs.

We need the following characterization of chordal graphs to prove
our main result. 
%%%
A complete subgraph of a graph $G$ is called a {\em clique} of $G$. 
%%%
A vertex $v$ of a graph $G$ is said to be {\em simplicial} if
$v$ together with all its adjacent vertices 
%%%
induce 
%%%
a clique in $G$. An
ordering $v_1, v_2, \ldots , v_n$ of all the vertices of $G$ forms
a {\em perfect elimination ordering} of $G$ if each $v_i$, $1
\leqslant i \leqslant n$, is simplicial in the subgraph induced by
$v_i, v_{i+1}, \ldots , v_n$.

\begin{theorem}{\rm \cite{ro72}}
A graph $G$ is a chordal graph if and only if it has a perfect
elimination ordering.
\end{theorem}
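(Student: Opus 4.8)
The plan is to prove the two implications of the equivalence separately; essentially all the work is in showing that a chordal graph admits a perfect elimination ordering, since the converse is a one-paragraph argument. For that converse, assume $v_1, v_2, \ldots , v_n$ is a perfect elimination ordering and let $C$ be any cycle of $G$ with $|C| \geqslant 4$. Let $v_k$ be the vertex of $C$ of least index. Its two neighbours on $C$ both lie among $v_{k+1}, \ldots , v_n$, so they belong, together with $v_k$, to the subgraph induced by $v_k, v_{k+1}, \ldots , v_n$; since $v_k$ is simplicial there, these two neighbours are adjacent, and because $|C| \geqslant 4$ they are non-consecutive on $C$. Thus that edge is a chord of $C$, so $G$ is chordal.

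For the forward direction I would construct the ordering greedily, peeling off one simplicial vertex at a time. This succeeds once we know (a) every chordal graph on at least one vertex has a simplicial vertex, and (b) deleting a vertex from a chordal graph yields a chordal graph. Statement (b) is immediate: a cycle of $G - v$ is already a cycle of $G$, and any chord of it is an edge between two vertices of $G - v$, hence an edge of $G - v$. Granting (a) and (b), we let $v_1$ be a simplicial vertex of $G$, then $v_2$ a simplicial vertex of the (still chordal) graph $G - v_1$, and so on; the list produced is a perfect elimination ordering by construction.

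The real content is (a), and I would obtain it from the stronger classical statement that a chordal graph which is not complete has two \emph{non-adjacent} simplicial vertices, proved by induction on $|G|$. If $G$ is complete there is nothing to do. Otherwise choose non-adjacent vertices $a$ and $b$ and a minimal $a$--$b$ vertex separator $S$, and let $A$, $B$ be the vertex sets of the components of $G - S$ containing $a$, $b$ respectively. The key auxiliary fact is that $S$ induces a clique: for distinct $u, w \in S$, minimality of $S$ forces $u$ and $w$ each to have a neighbour in $A$ and in $B$, so one can take a shortest (hence induced) $u$--$w$ path with interior in $A$ and another with interior in $B$; their union is a cycle of length at least four whose only conceivable chord is $uw$, and chordality of $G$ therefore puts $uw$ in $E(G)$. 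Now apply the induction hypothesis to the smaller chordal graph $G[A \cup S]$: it is complete or has two non-adjacent simplicial vertices, and since $S$ is a clique, in either case it has a simplicial vertex lying in $A$; such a vertex has all of its $G$-neighbours inside $A \cup S$, so it is simplicial in $G$ as well. Symmetrically $G[B \cup S]$ supplies a simplicial vertex of $G$ in $B$, and the two vertices, lying in different components of $G - S$, are non-adjacent.

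I expect the clique property of minimal separators to be the main obstacle, for two reasons: one must choose the two connecting paths to be induced so that chordality is forced to yield exactly the edge $uw$ and nothing else, and one must ensure the induction actually delivers a simplicial vertex on the side $A$ rather than inside $S$ --- which is precisely why it is essential to carry the stronger ``two non-adjacent simplicial vertices'' hypothesis through the induction instead of merely ``one simplicial vertex''. The remaining pieces --- the easy implication, fact (b), and the greedy assembly --- are routine.
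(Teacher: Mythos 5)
Your proof is correct. Note that the paper itself does not prove this theorem at all; it is quoted as a known result with a citation to Rose, so there is no in-paper argument to compare against. What you give is the classical Dirac-style proof: the easy direction via the least-indexed vertex of a cycle, and the hard direction via the lemma that minimal vertex separators in chordal graphs induce cliques, strengthened to ``every non-complete chordal graph has two non-adjacent simplicial vertices'' so that the induction can be forced to deliver a simplicial vertex on each side of the separator. All the steps you flag as delicate (shortest paths being induced so the only available chord is $uw$; two non-adjacent vertices not both fitting inside the clique $S$; $G[A\cup S]$ being a proper induced, hence chordal, subgraph) are handled correctly, and the greedy peeling at the end assembles the perfect elimination ordering as claimed.
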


The reader is referred to Golumbic's classic \cite{go} for more
information on chordal graphs.

%%%%%%%%%%%%%%%%%%%%%%%%%%%%%%%%%%%%%%%%%%%%%%%%%%%%
%
%
\section{Results}
%
%
%%%%%%%%%%%%%%%%%%%%%%%%%%%%%%%%%%%%%%%%%%%%%%%%%%%

Up to the naming of vertices, any acyclic orientation $D$ of
$K_n$ produces the topological ordering $v_1,\ldots ,v_n$
such that the arc $v_i\rightarrow v_j$ belongs to $D$ if and
only if $i<j$. Moreover, $v_i\rightarrow v_j$ is a dependent
arc in $D$ if and only if $j-i>1$. A vertex is called a {\em source}
(or {\em sink}) if it has no ingoing (or outgoing) arc. 
%%%
The following observation is very useful in the sequel.
Let $D$ be an acyclic orientation of the complete graph
$K_n$ where $n \geqslant 3$.
%%%
%We also observe that 
The number of dependent arcs in $D$ incident to a vertex $v$
is $n-2$ if $v$ is the source or the sink of $D$ and is $n-3$
otherwise.

In this section, we assume that the clique $Q$ of a graph $G$ has $q$
vertices. Let $G'$ be the graph obtained from $G$ by adding a new
vertex $v$ adjacent to all vertices of $Q$. %It is easy to see that
%$d_{\max}(G')=d_{\max}(G)+q-1$. 
%%%
We see that $d_{\max}(G')=\|G'\|-|G'|+1=(\|G\|+q)-(|G|+1)+1
=(\|G\|-|G|+1)+q-1=d_{\max}(G)+q-1$.
%%%
Furthermore, we have the following.

\begin{lemma}\label{main}
{\rm ({\bf 1)}}\
If $G$ has an acyclic orientation $D$ with $d(D)=d$, then $G'$ has an
acyclic orientation $D'$ with $d(D')=d+q-1$.

{\rm ({\bf 2)}}\
We have $d_{\min}(G')$ equal to $d_{\min}(G)+q-2$ or $d_{\min}(G)+q-1$.
\end{lemma}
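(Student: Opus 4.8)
The key object is the clique $Q$ of size $q$ and the new vertex $v$ adjacent to all of $Q$; note that $Q \cup \{v\}$ induces a copy of $K_{q+1}$ in $G'$. The useful counting observation stated just before the lemma — that in any acyclic orientation of $K_n$ a vertex incident to $n-2$ dependent arcs if it is the source or sink and $n-3$ otherwise — is exactly the right tool, applied with $n = q+1$.

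**Part (1).** Given an acyclic orientation $D$ of $G$ with $d(D) = d$, I would extend it to $G'$ by orienting all $q$ new edges out of $v$ (or all into $v$); call this $D'$. Since $v$ becomes a source in $D'$, no directed cycle can pass through $v$, so $D'$ is acyclic. I then need to track the change in the number of dependent arcs. No arc of the original $G$ changes status: a directed walk of length $\geqslant 2$ witnessing dependency in $G$ still exists in $G'$, and conversely any such walk in $D'$ avoids $v$ (as $v$ is a source) so it already lived in $D$. Thus the only new dependent arcs are among the $q$ arcs leaving $v$. An arc $v \to w$ with $w \in Q$ is dependent iff there is a directed walk of length $\geqslant 2$ from $v$ to $w$; since every vertex of $Q$ is an out-neighbour of $v$ and $Q$ is a clique, $v \to w$ is dependent precisely when $w$ is not the source of the tournament $D'$ restricted to $Q \cup \{v\}$ — equivalently, precisely when $w$ is not the unique vertex of $Q$ that is a sink relative to $Q$ in $D$ ... more cleanly: restricting $D'$ to $K_{q+1}$, the vertex $v$ is the source, so by the observation $v$ is incident to exactly $(q+1)-2 = q-1$ dependent arcs, and these are all new. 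Hence $d(D') = d + q - 1$, as claimed. (This also re-proves $d_{\max}(G') = d_{\max}(G) + q - 1$ consistently with the displayed computation, and shows the bound is attained.)

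**Part (2).** The lower direction $d_{\min}(G') \geqslant d_{\min}(G) + q - 2$ is the substantive half. Start from an optimal acyclic orientation $D'$ of $G'$ with $d(D') = d_{\min}(G')$, and let $D$ be its restriction to $G$; then $D$ is acyclic, so $d(D) \geqslant d_{\min}(G)$. I must compare $d(D')$ with $d(D)$. Consider the $q+1$ vertices of $K_{q+1} = Q \cup \{v\}$ inside $D'$. Removing $v$ deletes the $q$ arcs incident to $v$; by the observation, $v$ is incident to at least $q - 2$ dependent arcs of $D'$ (with equality unless $v$ is a source or sink of the $K_{q+1}$). So at least $q-2$ dependent arcs of $D'$ are destroyed by deleting $v$. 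The delicate point is that deleting $v$ could also change the dependency status of arcs of $G$ — an arc $x \to y$ of $G$ that was dependent in $D'$ only because of a directed walk routed through $v$ might cease to be dependent in $D$. Here I would argue that such a walk through $v$ can be rerouted: $v$'s in-neighbours and out-neighbours all lie in the clique $Q$, so if a walk enters $v$ from $a \in Q$ and leaves to $b \in Q$, then $a \to b$ is an arc of $Q$ (since $Q$ is a clique and $D'$ is acyclic, the orientation between $a$ and $b$ must be $a \to b$), giving a shortcut avoiding $v$ of no greater... one needs the replacement walk still to have length $\geqslant 2$, which holds because the portions before $a$ and after $b$ are nonempty or can be kept. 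So no arc of $G$ loses dependency when passing from $D'$ to $D$, whence $d(D) \leqslant d(D') - (q-2)$, giving $d_{\min}(G) \leqslant d_{\min}(G') - (q-2)$, i.e. $d_{\min}(G') \geqslant d_{\min}(G) + q - 2$. For the upper direction $d_{\min}(G') \leqslant d_{\min}(G) + q - 1$, simply take an optimal $D$ for $G$ and apply Part (1). Combining, $d_{\min}(G')$ is either $d_{\min}(G) + q - 2$ or $d_{\min}(G) + q - 1$.

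**Anticipated main obstacle.** The routine parts are the two acyclicity checks and the direct application of the $K_n$ counting observation; the real work is the rerouting argument in Part (2) — verifying carefully that deleting the source/near-source vertex $v$ from the tournament $Q \cup \{v\}$ cannot decrease the dependency count of the rest of $G$ by more than the $q$-versus-$(q-2)$ slack already accounted for, and in particular that a directed walk witnessing dependency of some arc of $G$ and passing through $v$ always admits a replacement walk of length $\geqslant 2$ avoiding $v$. Handling the edge cases (when $v$ is forced to be a source or sink of $K_{q+1}$, when the rerouted walk risks collapsing to length $1$) is where I expect to spend most of the care.
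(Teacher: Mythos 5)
Your overall strategy coincides with the paper's: for (1) extend $D$ by making $v$ a source, and for (2) restrict an optimal orientation of $G'$ and apply the $K_{q+1}$ counting observation to the clique on $Q\cup\{v\}$. However, there are two genuine defects in the details. In part (1), you conclude that \emph{exactly} $q-1$ of the new arcs are dependent by applying the observation to the induced copy of $K_{q+1}$. That observation counts arcs that are dependent inside that clique; it does not rule out that the remaining arc $v\rightarrow v_1$ (where $v_1$ is the topological minimum of $Q$ under $D$) is dependent in $D'$ via a directed walk that leaves $Q$ --- precisely the phenomenon the paper later isolates as a ``non-trivial'' dependent arc of a clique. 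The missing step (the paper's Case 2) is to note that any directed walk $v\rightarrow v_i\rightarrow\cdots\rightarrow v_1$ of length at least two would, together with the arc $v_1\rightarrow v_i$ of $D$, close a directed cycle in $D$, a contradiction. Without this you only obtain $d(D')\geqslant d+q-1$, and the exact value is what the interpolation argument needs.

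In part (2), your rerouting claim --- that no arc of $G$ loses dependency in passing from $D'$ to its restriction $D$ --- is false. If the only witness for the dependency of an arc $x\rightarrow y$ with $x,y\in Q$ is the length-two walk $x\rightarrow v\rightarrow y$, the shortcut $x\rightarrow y$ has length one and witnesses nothing; already for $Q=K_2$ and $G'=K_3$ the unique arc of $Q$ is dependent in $D'$ but not in the restriction $D$. Fortunately the claim is not needed: the inequality $d(D')\geqslant d(D)+q-2$ requires only the trivial direction (every arc dependent in $D$ remains dependent in $D'$, since a witness walk in $D$ survives in $D'$) together with the count of at least $q-2$ dependent arcs of $D'$ incident to $v$; arcs that lose dependency when $v$ is deleted only decrease $d(D)$ and hence strengthen the inequality. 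This one-line accounting is exactly the paper's argument, so your part (2) becomes correct once the rerouting paragraph is deleted, but as written it rests on a false intermediate statement and misidentifies which direction of the comparison is delicate.
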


\begin{proof}
The statements hold trivially when $q=1$. Assume $q \geqslant 2$.

{\bf (1)}\
Let $D'$ be the extension of $D$ into $G'$ by making $v$ into a source.
Clearly, $D'$ is an acyclic orientation. Let $v_1,\ldots ,v_q$ be the
topological ordering of vertices of $Q$ with respect to $D$. Suppose
that $x \rightarrow y$ is a dependent arc in $D'$. 

%%%
Case 1.\
%%%
If this arc is in $D$,
then it is already dependent in $D$ since $v$ is a source in $D'$. 

%This arc cannot be identical to $v \rightarrow v_1$ since a directed cycle in
%$D$ could be produced. The remaining arcs in $D'$ are $v \rightarrow v_k$
%for $2 \leqslant k\leqslant q$ and each of them is a dependent arc in $D'$.

%%%
Case 2.\
If this arc is $v \rightarrow v_1$, then, for some $2 \leqslant i 
\leqslant q$, a directed path $v \rightarrow v_i \rightarrow z_1 
\rightarrow \cdots  \rightarrow z_i  \rightarrow v_1$ of length at 
least three would be produced such that  $z_1, \ldots , z_i$ are all 
vertices in $G$. It follows that $v_1 \rightarrow v_i 
\rightarrow z_1 \rightarrow \cdots  \rightarrow z_i  \rightarrow v_1$ is a
directed cycle in $D$, contradicting to the acyclicity of $D$.

Case 3.\
If this arc is $v \rightarrow v_k$ for $2 \leqslant k\leqslant q$,
then it is a dependent arc in $D'$ since $v \rightarrow 
v_{k-1} \rightarrow v_k$ is a directed path of length two.
%%%

Therefore, $d(D')=d+q-1$.

{\bf (2)}\
By statement {\bf (1)}, we have $d_{\min}(G') \leqslant d_{\min}(G)
+q-1$. Let $D'$ be an acyclic orientation of $G'$ with $d(D')=
d_{\min}(G')$. 
%%%
Since the subgraph induced by $Q$ and $\{v\}$ is a clique of order $q+1$, 
%%%
the number of dependent arcs in $D'$ incident to $v$ is $q-1$ or $q-2$.
Let $D$ be the restriction of $D'$ to $V(G)$. Then we have $d_{\min}(G')
=d(D')\geqslant d(D)+q-2\geqslant d_{\min}(G)+q-2$.
\end{proof}

\bigskip

Since every number $d$ satisfying $d_{\min}(G) +q-1 \leqslant d\leqslant
d_{\max}(G)+q-1$ is achievable as $d(D')$ for some acyclic orientation
$D'$ of $G'$ by {\bf (1)}, the following is a consequence of {\bf (2)}.

\begin{corollary}
If $G$ is fully orientable, so is $G'$.
\end{corollary}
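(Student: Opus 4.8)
The plan is to combine the two parts of Lemma~\ref{main} together with the already-recorded identity $d_{\max}(G')=d_{\max}(G)+q-1$. Assume $G$ is fully orientable, so that for every integer $d$ with $d_{\min}(G)\leqslant d\leqslant d_{\max}(G)$ there is an acyclic orientation of $G$ with exactly $d$ dependent arcs. Applying part \textbf{(1)} of Lemma~\ref{main} to each such orientation, I obtain, for every integer $d'$ with $d_{\min}(G)+q-1\leqslant d'\leqslant d_{\max}(G)+q-1$, an acyclic orientation $D'$ of $G'$ with $d(D')=d'$. Since $d_{\max}(G)+q-1=d_{\max}(G')$, this shows that every value in the interval $[\,d_{\min}(G)+q-1,\ d_{\max}(G')\,]$ is realized by an acyclic orientation of $G'$.

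It remains to handle the possible value(s) below $d_{\min}(G)+q-1$. By part \textbf{(2)} of Lemma~\ref{main}, $d_{\min}(G')$ equals either $d_{\min}(G)+q-1$ or $d_{\min}(G)+q-2$. In the first case the interval above is exactly $[\,d_{\min}(G'),\ d_{\max}(G')\,]$ and we are done. In the second case the only integer in $[\,d_{\min}(G'),\ d_{\max}(G')\,]$ not yet covered is $d_{\min}(G')=d_{\min}(G)+q-2$ itself; but by the very definition of $d_{\min}(G')$ there exists an acyclic orientation of $G'$ with exactly that many dependent arcs. Hence in both cases every value in $[\,d_{\min}(G'),\ d_{\max}(G')\,]$ is attained, so $G'$ is fully orientable.

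There is essentially no obstacle left at this stage: the whole difficulty has been absorbed into Lemma~\ref{main}, and in particular into the lower bound $d_{\min}(G')\geqslant d_{\min}(G)+q-2$ of part \textbf{(2)}, which guarantees that the interval produced by part \textbf{(1)} is deficient by at most the single endpoint value $d_{\min}(G')$. The only thing to be careful about is to phrase the two-case split cleanly and to note explicitly that $d_{\min}(G')$ is trivially achievable, so that no gap remains in the spectrum of dependent-arc counts for $G'$.
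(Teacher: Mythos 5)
Your proposal is correct and follows essentially the same route as the paper: part \textbf{(1)} of Lemma~\ref{main} fills the interval from $d_{\min}(G)+q-1$ up to $d_{\max}(G)+q-1=d_{\max}(G')$, and part \textbf{(2)} guarantees that at most the single value $d_{\min}(G')$ lies below this interval, which is attained by definition of the minimum. The paper states exactly this argument in one sentence before the corollary; your write-up just makes the two-case split explicit.
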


The above theorem amounts to preserving full orientability by the addition
of a simplicial vertex. Hence, by successively applying it to the reverse
of a perfect elimination ordering of a connected chordal graph, every
such graph is fully orientable. Our main result thus follows.

\begin{theorem}
If $G$ is a chordal graph, then $G$ is fully orientable.
\end{theorem}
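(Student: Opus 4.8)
The plan is to prove the theorem by induction on $|G|$, with the corollary above (that adjoining a simplicial vertex preserves full orientability) doing all of the real work in the inductive step. For the base case $|G|=1$ there is nothing to prove, since then $d_{\min}(G)=d_{\max}(G)=0$.

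For the inductive step I would take a chordal graph $G$ with $|G|\geqslant 2$ and, using Theorem~1, fix a perfect elimination ordering $v_1,\ldots,v_n$ of $G$. Its first vertex $v:=v_1$ is simplicial, so $Q:=N_G(v)$ is a clique of $G$. Deleting $v$ yields $G-v$, which is again chordal (the tail $v_2,\ldots,v_n$ is a perfect elimination ordering of it) and hence, by the induction hypothesis, fully orientable. The key observation is that $G$ is precisely the graph obtained from $G-v$ by adjoining one new vertex $v$ joined to every vertex of the clique $Q\subseteq V(G-v)$ --- exactly the construction ``$G\mapsto G'$'' set up just before Lemma~\ref{main}. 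Hence, provided $Q\neq\emptyset$, the corollary applies verbatim (with $G-v$ in the role of $G$) and shows that $G$ is fully orientable. If instead $Q=\emptyset$, then $v$ is an isolated vertex and $G$ is the disjoint union of $G-v$ with a one-vertex graph; since a graph is fully orientable as soon as all of its connected components are, $G$ is again fully orientable. To finish, any disconnected chordal graph has chordal components, each fully orientable by the above, so it too is fully orientable.

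I do not expect a genuine obstacle here: the substance has already been absorbed into Lemma~\ref{main}, whose proof carries out the delicate bookkeeping --- producing from any acyclic orientation $D$ of $G$ an acyclic orientation $D'$ of $G'$ with $d(D')=d(D)+q-1$, and bounding the number of dependent arcs incident to $v$ in a $d_{\min}$-optimal orientation so as to pin $d_{\min}(G')$ to $d_{\min}(G)+q-2$ or $d_{\min}(G)+q-1$ --- in tandem with the edge/vertex count $d_{\max}(G')=d_{\max}(G)+q-1$. The only remaining points that need a careful word are verifying that these quantities interlock so that every intermediate value in $[d_{\min}(G'),d_{\max}(G')]$ is realised (the ``$+q-2$ versus $+q-1$'' ambiguity is harmless exactly because $d_{\min}(G)$ itself is attained), and the degenerate case $q=0$, where the corollary does not literally apply and one falls back on closure of full orientability under disjoint union. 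Both are routine, so the induction should go through cleanly.
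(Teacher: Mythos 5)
Your proposal is correct and is essentially the paper's own argument: the paper likewise builds a connected chordal graph by adding simplicial vertices along the reverse of a perfect elimination ordering and invokes the corollary at each step, reducing disconnected graphs to their components. Your inductive phrasing and the explicit treatment of the $Q=\emptyset$ case are just slightly more careful renderings of the same proof.
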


%%%
{\bf Remark}.\
%%%
%We remark that 
Adding a simplicial vertex may not increase the maximum
and the minimum numbers of dependent edges by the same amount. For
instance, any acyclic orientation of a triangle gives rise to exactly
one dependent arc. However, the graph $K_4$ minus an edge, which is
obtained from a triangle by adding a simplicial vertex, has minimum
value one and maximum value two. 

\bigskip

Now we want to give a characterization
to tell which case in {\bf (2)} of Lemma \ref{main} will happen.
%%%
A dependent arc in $Q$ is said to be {\em non-trivial} with 
respect to the acyclic orientation $D$ if it is dependent in $D$
but not in the induced orientation $D[Q]$.  Equivalently, any 
directed cycle obtained by reversing that arc contains vertices 
not in $Q$. 

\begin{lemma}\label{more}
Assume $q \geqslant 2$. There is an acyclic orientation $D$ of $G$
such that $Q$ has a dependent arc that is non-trivial with respect 
to $D$ if and only if $D$ can be extended to an acyclic orientation 
$D'$ of $G'$ with $d(D') = d(D)+q-2$.
\end{lemma}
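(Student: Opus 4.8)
The plan is to prove both directions by carefully tracking how dependent arcs incident to the new vertex $v$ interact with dependent arcs inside $Q$. Recall from the observation on complete graphs that in any acyclic orientation of the clique on $Q \cup \{v\}$ (which has $q+1$ vertices), the number of dependent arcs incident to $v$ is exactly $q-1$ if $v$ is a source or sink of that sub-orientation, and $q-2$ otherwise. So the case $d(D') = d(D) + q - 2$ in part (2) of Lemma \ref{main} corresponds precisely to extensions $D'$ in which $v$ is neither a source nor a sink of the clique $D'[Q \cup \{v\}]$, and moreover in which no dependent arc inside $G$ is lost or gained relative to $D$ except through this clique count. The heart of the matter is to show that choosing such a ``middle'' position for $v$ is possible without creating spurious dependencies among the old arcs if and only if $Q$ already carries a non-trivial dependent arc with respect to $D$.

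\medskip

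\noindent\emph{($\Leftarrow$)} Suppose $D'$ extends $D$ with $d(D') = d(D) + q - 2$. By the clique observation applied to $D'[Q\cup\{v\}]$, the vertex $v$ is neither the source nor the sink of this clique, so there exist $v_a, v_b \in Q$ with $v_a \to v \to v_b$ in $D'$. Since $Q$ is a clique, $v_a$ and $v_b$ are adjacent in $G$; I would examine the arc between them in $D$. If the arc is $v_a \to v_b$, then $v_a \to v \to v_b$ is a directed walk of length two in $D'$ from $v_a$ to $v_b$, so $v_a \to v_b$ is dependent in $D'$. It remains to argue this arc is non-trivial with respect to $D$ — i.e., dependent in $D$ but witnessed only through vertices outside $Q$, or (after a slight restatement) that its dependency in $D'$ forces a non-trivial witness in $D$. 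Here I would use the count: the total $d(D') = d(D) + q - 2$ is one less than the ``source'' extension value $d(D)+q-1$, so exactly one arc that would have been dependent in the source-extension is no longer contributing; a bookkeeping argument comparing $D'$ to the source extension of $D$ pins this down to mean $Q$ must already have had a non-trivial dependent arc in $D$.

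\medskip

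\noindent\emph{($\Rightarrow$)} Suppose $x \to y$ is an arc of $Q$ that is dependent in $D$ but not in $D[Q]$, so there is a directed walk from $x$ to $y$ in $D$ of length at least two passing through a vertex outside $Q$. Let $v_1, \ldots, v_q$ be the topological ordering of $Q$ under $D[Q]$; then $x = v_i$, $y = v_j$ with $i < j$. I would extend $D$ to $D'$ by inserting $v$ into the topological order of $Q$ strictly between $v_i$ and $v_j$ — say orienting $v_k \to v$ for $k \le i$ and $v \to v_k$ for $k > i$ — so that $v$ is neither the source nor the sink of $D'[Q \cup \{v\}]$. By the clique observation, $v$ is incident to exactly $q-2$ dependent arcs of $D'$. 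I must then verify that $d(D') = d(D) + q - 2$ exactly, i.e., that reversing the count of old dependent arcs balances: no old independent arc of $D$ becomes dependent in $D'$ (this follows as in Lemma \ref{main}, since any new directed walk through $v$ projects to a directed walk in $D$ through $\{v_i, v\}$, hence through $v_i$, creating no genuinely new dependency), and the arc $v_i \to v_j$, which was dependent in $D$ via an external witness, remains dependent in $D'$ — its external witness survives untouched. Counting: the $q-2$ dependent arcs at $v$ replace the $q-1$ at $v$ in the source extension, and the non-trivial arc $v_i \to v_j$ absorbs the discrepancy, giving $d(D') = d(D)+q-2$.

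\medskip

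\noindent The step I expect to be the main obstacle is the precise bookkeeping in both directions: showing that the global count $d(D')$ decomposes cleanly as (dependent arcs inside $G$ under $D$, unchanged) plus (dependent arcs incident to $v$), with no cross-terms — that is, that inserting $v$ into the middle of $Q$'s topological order neither destroys an old dependency nor creates a new one among the edges of $G$, and that the arithmetic then forces the presence (resp. is forced by the presence) of a non-trivial dependent arc. The forward direction's choice of where to insert $v$ is clean; the converse direction's extraction of the non-trivial arc from the mere numerical equality $d(D') = d(D)+q-2$ is the delicate part, and I would handle it by directly comparing $D'$ with the canonical source-extension of $D$ and identifying which incident arc of $v$ ``went missing.''
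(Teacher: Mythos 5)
Your overall strategy matches the paper's (insert $v$ into the middle of the topological order of $Q$ for one direction, count dependent arcs at $v$ for the other), but the ``bookkeeping'' you defer in both directions is where the actual content of the lemma lives, and the one argument you do offer for it would fail. The missing idea is this: since the arcs $v_i\rightarrow v_j$ with $j-i>1$ are already dependent in the induced orientation $D[Q]$ (via $v_i\rightarrow v_{i+1}\rightarrow\cdots\rightarrow v_j$), a \emph{non-trivial} dependent arc of $Q$ must join \emph{consecutive} vertices $v_k,v_{k+1}$ of the topological ordering of $Q$. In your forward direction you never derive $j=i+1$, and your justification that no old independent arc becomes dependent --- ``any new directed walk through $v$ projects to a directed walk in $D$ through $v_i$'' --- is wrong as stated: the projection of the length-two walk $v_i\rightarrow v\rightarrow v_{i+1}$ is the single arc $v_i\rightarrow v_{i+1}$, which witnesses nothing. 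That arc $v_i\rightarrow v_{i+1}$ is exactly the one edge of $G$ whose status can change, and the count closes only because, by consecutiveness, it \emph{is} the hypothesized non-trivial dependent arc and hence was already dependent in $D$. For every other arc $v_a\rightarrow v_b$ with $a\leqslant i<b$ one has $b-a\geqslant 2$, so it was dependent in $D[Q]$ all along; this is the clean decomposition you were hoping for, but it has to be said.

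In the converse direction the same point is the crux. You pick an arbitrary pair $v_a\rightarrow v\rightarrow v_b$, but an arbitrary such arc $v_a\rightarrow v_b$ may have $b>a+1$, in which case it is dependent in $D[Q]$ and can never be non-trivial; the argument must use acyclicity to show the in-neighbours of $v$ form an initial segment $v_1,\ldots,v_k$ of the topological order and then single out the arc $v_k\rightarrow v_{k+1}$. The counting you postpone is then direct (no comparison with the source extension is needed): the $q-2$ arcs $vv_r$, $r\neq k,k+1$, are dependent in $D'$, every dependent arc of $D$ stays dependent in $D'$, and $v_k\rightarrow v_{k+1}$ is dependent in $D'$ via $v_k\rightarrow v\rightarrow v_{k+1}$; if it were not already dependent in $D$ the total would be at least $d(D)+q-1$, a contradiction. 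Non-triviality then follows because consecutiveness rules out dependence in $D[Q]$. So the proposal is not wrong in outline, but the steps you flag as ``the main obstacle'' are genuinely open in your write-up, and the consecutiveness observation needed to close them is absent.
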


\begin{proof}
$(\Rightarrow)$\
Assume that $D$ is an acyclic orientation of $G$ such that $Q$ has
a dependent arc that is non-trivial with respect to $D$. Let $v_1,
\ldots ,v_q$ be the topological ordering of the vertices of $Q$ with 
respect to $D$. The arcs in the set $\{v_i \rightarrow v_j \mid j - i 
> 1 \}$ are dependent arcs that are not non-trivial with respect to $D$.
%%% 

%\begin{lemma}\label{more}
%rewritten by Lai
%There is an acyclic orientation $D$ of $G$
%such that $d(D)=d_{\min}(G)$ and $Q$ has more than $(q-1)(q-2)/2$
%arcs that are dependent in $D$ if and only if $d_{\min}(G') =
%d_{\min}(G)+q-2$.
%Assume $q \geqslant 2$. There is an acyclic orientation $D$ of $G$
%such that $Q$ has more than $(q-1)(q-2)/2$ arcs that are dependent
%in $D$ if and only if $D$ can be extended to an acyclic orientation
%$D'$ of $G'$ with $d(D') = d(D)+q-2$.
%rewritten by Lai-end
%\end{lemma}
%
%\begin{proof}
%Let $v_1,\ldots ,v_q$ be the topological ordering of the vertices of
%$Q$ with respect to $D$.
%rewritten by Lai-remove
%We actually prove the following general statement. There is an
%acyclic orientation $D$ of $G$ such that $Q$ has more than
%$(q-1)(q-2)/2$ arcs that are dependent in $D$ if and only if $D$ can
%be extended to an acyclic orientation $D'$ of $G'$ with $d(D') =
%d(D)+q-2$.
%rewritten by Lai-end
%
%rewritten by Lai
%Assume that $D$ is an acyclic orientation of $G$ such that $Q$ has
%more than $(q-1)(q-2)/2$ arcs that are dependent in $D$.
%rewritten by Lai-end
%The arc set $\{v_i \rightarrow v_j \mid j - i > 1 \}$ of size
%$(q-1)(q-2)/2$ consists of dependent arcs in $D$. 

By our assumption,
we can find $1 \leqslant k < q$ such that $v_k \rightarrow v_{k+1}$
is a dependent arc in $D$. We obtain an extension $D'$ of $D$ into
$G'$ by defining $v_a \rightarrow v$ for all $a\leqslant k$ and
$v\rightarrow v_b$ for all $b>k$. This $D'$ must be acyclic for
otherwise a directed path would be produced in $D$, contradicting
the acyclicity of $D$. The set $\{vv_r \mid r\neq k,k+1\}$ gives
rise to a set of dependent arcs in $D'$ and both $vv_k$ and
$vv_{k+1}$ are not dependent in $D'$. Moreover, an edge of $G$ is
dependent in $D$ if and only if it is dependent in $D'$. Therefore,
$d(D')=d(D)+q-2$.

%%%
$(\Leftarrow)$\
%%%
%rewritten by Lai
Assume that $D$ can be extended to an acyclic orientation $D'$ of
$G'$ with $d(D') = d(D)+q-2$.
%rewritten by Lai-end
If the vertex $v$ is a source or a sink, then
$d(D')=d(D)+q-1$, contradicting our assumption. Without loss of
generality, we may suppose that, for some $1 \leqslant k < q$, $v_a
\rightarrow v$ for all $1 \leqslant a \leqslant k$ and $v
\rightarrow v_{k+1}$. The acyclicity of $D'$ implies that $v
\rightarrow v_b$ for all $b > k$. Hence, the arc $v_k \rightarrow
v_{k+1}$ is dependent in $D'$
%%%
for $v_k \rightarrow v \rightarrow v_{k+1}$ is a directed path of length two.
%%%
Since the $q-2$ arcs $vv_r$  ($r\neq
k,k+1$) incident to $v$ are already dependent in $D'$, it forces
$v_k \rightarrow v_{k+1}$ to be a dependent arc in $D$. Therefore,
%$Q$ has more than $(q-1)(q-2)/2$ arcs that are dependent in $D$.
%%%
$v_k \rightarrow v_{k+1}$ is non-trivial with respect to $D$.
%%%
\end{proof}

%rewritten by Lai
\begin{corollary}\label{more'}
Assume $q \geqslant 2$. There is an acyclic orientation $D$ of $G$
such that $d(D)=d_{\min}(G)$ and $Q$ has 
%more than $(q-1)(q-2)/2$ arcs that are dependent in $D$
%%%
a dependent arc that is non-trivial with respect to $D$
%%% 
if and only if $d_{\min}(G') =d_{\min}(G)+q-2$.
\end{corollary}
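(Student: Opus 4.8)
The statement to prove is Corollary~\ref{more'}: for $q \geqslant 2$, there is an acyclic orientation $D$ of $G$ with $d(D) = d_{\min}(G)$ such that $Q$ has a dependent arc non-trivial with respect to $D$ if and only if $d_{\min}(G') = d_{\min}(G) + q - 2$. The plan is to deduce this directly from Lemma~\ref{more} together with part~{\bf (2)} of Lemma~\ref{main}, since Lemma~\ref{more} already supplies the crucial equivalence relating non-trivial dependent arcs in $Q$ to extensions $D'$ with $d(D') = d(D) + q - 2$.

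For the forward direction, suppose $D$ is an acyclic orientation of $G$ with $d(D) = d_{\min}(G)$ in which $Q$ has a non-trivial dependent arc. By Lemma~\ref{more}, $D$ extends to an acyclic orientation $D'$ of $G'$ with $d(D') = d(D) + q - 2 = d_{\min}(G) + q - 2$. Hence $d_{\min}(G') \leqslant d_{\min}(G) + q - 2$. But by Lemma~\ref{main}{\bf (2)}, $d_{\min}(G') \geqslant d_{\min}(G) + q - 2$, so equality holds.

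For the reverse direction, suppose $d_{\min}(G') = d_{\min}(G) + q - 2$. Take an acyclic orientation $D'$ of $G'$ achieving $d(D') = d_{\min}(G')$, and let $D$ be its restriction to $V(G)$. From the argument in the proof of Lemma~\ref{main}{\bf (2)}, $d(D') \geqslant d(D) + q - 2 \geqslant d_{\min}(G) + q - 2 = d(D')$, so both inequalities are equalities; in particular $d(D) = d_{\min}(G)$ and $d(D') = d(D) + q - 2$. Now apply the $(\Leftarrow)$ direction of Lemma~\ref{more} to the pair $D, D'$: since $D$ extends to the acyclic orientation $D'$ of $G'$ with $d(D') = d(D) + q - 2$, the lemma yields a dependent arc in $Q$ that is non-trivial with respect to $D$. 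This $D$ is the desired orientation.

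The only subtlety — and the one place where care is needed — is to make sure that the orientation $D$ produced in the reverse direction both attains $d_{\min}(G)$ \emph{and} carries a non-trivial dependent arc in $Q$ \emph{simultaneously}: it is tempting to invoke Lemma~\ref{more} to get \emph{some} orientation with a non-trivial dependent arc in $Q$ without controlling its dependent-arc count. The forced chain of equalities $d(D') \geqslant d(D) + q - 2 \geqslant d_{\min}(G) + q - 2 = d(D')$ is exactly what pins $d(D)$ to $d_{\min}(G)$, so this is where the proof must be explicit. No new combinatorial construction is required beyond what Lemmas~\ref{main} and~\ref{more} already provide.
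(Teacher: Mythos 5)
Your proof is correct and follows exactly the route the paper intends (the paper states this corollary without an explicit proof, leaving it as the evident combination of Lemma~\ref{more} with Lemma~\ref{main}{\bf (2)}). In particular, your observation that the chain $d(D') \geqslant d(D)+q-2 \geqslant d_{\min}(G)+q-2 = d(D')$ must collapse to equalities is precisely the point that makes the reverse direction work, and you have handled it correctly.
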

%rewritten by Lai-end

%%%
{\bf  Remark}.\
For the complete graph $K_n$ on $n$ vertices, $d_{\min}(K_n)=
d_{\max}(K_n)=(n-1)(n-2)/2$ is a well-known fact (\cite{west}).
Hence, the condition in Theorem \ref{more} and Corollary \ref{more'}
that $Q$ has a dependent arc that is non-trivial with respect to $D$ 
can be replaced by the condition that $Q$ has more than $(q-1)(q-2)/2$ 
arcs that are dependent in $D$.
%%%

\bigskip

In contrast to the addition of a simplicial vertex, the deletion
of a simplicial vertex may destroy full orientability. The following
example attests to this possibility.

Let $K_{r(n)}$ denote the complete $r$-partite graph each of whose
partite sets has $n$ vertices. It is proved in \cite{clt} that
$K_{r(n)}$ is not fully orientable when $r \geqslant 3$ and $n
\geqslant 2$. Any acyclic orientation of $K_{3(2)}$ has 4, 6, or 7
dependent arcs. Figure \ref{fig1} shows an acyclic orientation of
$K_{3(2)}$ with 6 dependent arcs. Two dependent arcs appear in the
innermost triangle 146. Let $K'$ be the graph obtained from $K_{3(2)}$
by adding a vertex $v$ adjacent to vertices 1, 4, and 6. By Lemma
\ref{main}, there exist acyclic orientations of $K'$ with 6, 8, or
9 dependent arcs. Actually, $d_{\max}(K')=9$. Applying Lemma
\ref{more} to Figure \ref{fig1}, we obtain an acyclic orientation
of $K'$ with 7 dependent arcs. Any acyclic orientation of $K_{3(2)}$
with 4 dependent arcs cannot have two dependent arcs from the triangle
146 since there are three triangles each of which is edge-disjoint
from the triangle 146 and we know that every triangle must have one
dependent arc. It follows from
%rewritten by Lai
Corollary \ref{more'}
%rewritten by Lai-end
that $d_{\min}(K')=6$. Hence, $K'$ is fully orientable. The deletion
of the simplicial vertex $v$ from $K'$ produces $K_{3(2)}$ that is
not fully orientable.

\begin{figure}
\begin{center}
\includegraphics{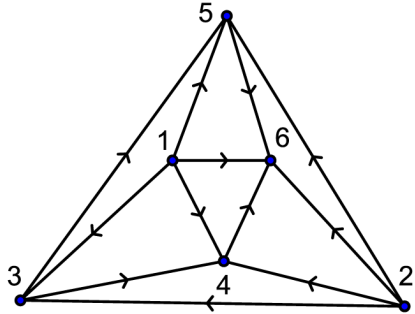}
\caption{An acyclic orientation of $K_{3(2)}$ with 6 dependent arcs.}
\label{fig1}
\end{center}
\end{figure}

%%%
{\bf Acknowlegment}.\ The authors are indebted to an anonymous referee
for constructive comments leading to great improvements.
%%%

%%%%%%%%%%%%%%%%%%%%%%%%%%%%%%%%%%%%%%%%%%%%%%%%%%%%%%%%%%%%%%%

\end{document}